\definecolor{blau}{rgb}{0.1,0.0,0.9}
\definecolor{gruen}{cmyk}{1.0,0.2,0.7,0.07}
\definecolor{mag}{cmyk}{0.0,0.9,0.3,0.0}
\DeclareMathOperator{\mad}{Mad}
\DeclareMathOperator{\ch}{ch}
\newtheorem{theorem}{Theorem}[section]
\newtheorem{lemma}[theorem]{Lemma}
\newtheorem{corollary}[theorem]{Corollary}
\newtheorem{proposition}[theorem]{Proposition}
\newtheorem{observation}[theorem]{Observation}
\newtheorem{remark}[theorem]{Remark}
\theoremstyle{definition}
\newtheorem{problem}[theorem]{Problem}
\begin{document}
\date{\today}
\title{A note on adaptable choosability and choosability with separation of planar graphs}

\author{
Carl Johan Casselgren\footnote{Department of Mathematics, 
Link\"oping University, 
SE-581 83 Link\"oping, Sweden.
{\it E-mail address:} carl.johan.casselgren@liu.se}
\and
Jonas B. Granholm\footnote{Department of Mathematics, 
Link\"oping University, 
SE-581 83 Link\"oping, Sweden.
{\it E-mail address:} jonas.granholm@liu.se}
\and
Andr\'e Raspaud\footnote{LaBRI, Uiversity of Bordeaux, France.
{\it E-mail address:} raspaud@labri.fr}
}
\maketitle

\bigskip
\noindent
{\bf Abstract.}
Let $F$ be a (possibly improper) edge-coloring of a graph $G$; a
vertex coloring of $G$ is \emph{adapted to} $F$ if no color appears
at the same time on an edge and on its two endpoints. If for some
integer $k$, a graph $G$ is such that given any list assignment $L$ to
the vertices of $G$, with $|L(v)| \ge k$ for all $v$, and any
edge-coloring $F$ of $G$, $G$ admits a coloring $c$ adapted to $F$
where $c(v) \in L(v)$ for all $v$, then $G$ is said to be
\emph{adaptably $k$-choosable}. 
A {\em $(k,d)$-list assignment} for a graph $G$ is a
map that assigns to each vertex $v$ a list $L(v)$ of at least $k$ colors
such that $|L(x) \cap L(y)| \leq d$ whenever $x$ and $y$ are adjacent.
A graph is {\em $(k,d)$-choosable} if 
for every $(k,d)$-list assignment $L$
there is an $L$-coloring of $G$.
It has been conjectured that planar graphs are $(3,1)$-choosable. 
We give some progress on this conjecture by giving  sufficient conditions for a planar
graph to be adaptably $3$-choosable. 
Since $(k,1)$-choosability is a special case of adaptable $k$-choosablity,
this implies
that a planar graph satisfying  these conditions is  $(3,1)$-choosable.

\bigskip

\noindent
\small{\emph{Keywords: Adaptable choosability, Choosability with separation, Planar graph, List coloring}}

\section{Introduction}

Given a graph $G$, assign to each vertex $v$ of $G$ a set
	$L(v)$ of colors (positive integers).
	Such an assignment $L$ is called a \emph{list assignment} for $G$ and
	the sets $L(v)$ are referred
	to as \emph{lists} or \emph{color lists}.
	We then want to find a proper
	vertex coloring $\varphi$ of $G$,
	such that $\varphi(v) \in L(v)$ for all
	$v \in V(G)$. If such a coloring $\varphi$ exists then
	$G$ is \emph{$L$-colorable} and $\varphi$
	is called an \emph{$L$-coloring}. Furthermore, $G$ is
	called \emph{$k$-choosable} if it is $L$-colorable
	for every $k$-list assignment $L$.
	
	This particular variant of vertex coloring is
	known as \emph{list coloring} or {\em choosability} of graphs
	and was introduced by Vizing \cite{Vizing} and independently
	by Erd\H os et al. \cite{ERT}.
	
	A recent variation on list coloring is the so-called model
	of choosability with separation where we require that lists of adjacent
	vertices have a bounded number of common colors.
	A {\em $(k,d)$-list assignment} for a graph $G$ is a
	map that assigns to each vertex $v$ a list $L(v)$ of at least $k$ colors
	such that $|L(x) \cap L(y)| \leq d$ whenever $x$ and $y$ are adjacent.
	A graph is {\em $(k,d)$-choosable} if for every $(k,d)$-list assignment $L$
	of $G$
	there is an $L$-coloring of $G$.
	Note that $G$ is $(k,k)$-choosable if and only if $G$ is $k$-choosable.
	Moreover, if $G$
	is $(k,d)$-choosable, then $G$ is also $(k',d')$-choosable for all 
	$k' \geq k$ and $d' \leq d$.
	
	Choosability with separation was first considered by Kratochvil et al.
	\cite{KratochvilTuzaVoigt}. Among other things they proved
	that every planar graph is $(4,1)$-choosable, which is a refinement 
	for choosability of separation of
	Thomassen's well-known result that planar graphs are $5$-choosable
	\cite{Thomassen}.
	By an example of Voigt \cite{Voigt}, Thomassen's result is best possible.
	
	Skrekovski \cite{Skrekovski} gave examples of triangle-free planar graphs
	that are not $(3,2)$-choosable, and posed the following question: 
	
	\begin{problem}
	\label{ques:31choice}
		Is every planar graph $(3,1)$-choosable?
	\end{problem}
	
	It follows from a result of Kratochvil et al. \cite{KratochvilTuzaVoigt} 
	that this question has a positive answer for the case of 
	triangle-free graphs.
	Recently, Choi et al.~\cite{ChoiLidickyStolee}
	proved that planar graphs without $4$-cycles are 
	$(3, 1)$-choosable. 
	This was slightly improved by Chen et al.~\cite{ChenLihWang} 
	who proved that planar
	graphs with no adjacent $4$-cycles and no adjacent $3$- and $4$-cycles
	are $(3, 1)$-choosable, where two cycles of a graph are {\em adjacent}
	if they share a common edge; two cycles are {\em intersecting}
	if they have at least one common vertex.
	
	The main purpose of this note is to give some further progress on 
	Problem \ref{ques:31choice}. In particular we prove that
	a planar graph $G$ is $(3,1)$-choosable if $G$ satisfies that
	\begin{itemize}
	
		\item[(i)] no two triangles are intersecting, and every triangle
		is adjacent to at most one $4$-cycle, or
	
		\item[(ii)] no triangle is adjacent to a triangle or a $4$-cycle,
		and every $5$-cycle is adjacent to at most three triangles.
		
	\end{itemize}	
	Further related results on Problem \ref{ques:31choice} appear in
	\cite{ChoiLidickyStolee, ChenFanWangWang, KiersteadLidicky, ChenFanRasp}.\\

Let  $G$ be a graph  and let $F$ be a (possibly improper) coloring of the edges of $G$ with integers.
A $k$-coloring $c:V(G)\rightarrow \{1, \ldots, k\}$ of the vertices
of $G$ is \emph{adapted} to $F$ if for every $uv \in E(G)$, $c(u)
\neq c(v)$ or $c(v) \neq F(uv)$. In other words, there is no
monochromatic edge i.e.~an edge whose two ends are colored with the same
color as the edge itself. If there is an integer $k$ such that for
any edge coloring $F$ of $G$, there exists a vertex $k$-coloring
of $G$ adapted to $F$, we say that $G$ is \emph{adaptably
$k$-colorable}. The smallest $k$ such that $G$ is adaptably
$k$-colorable is called the \emph{adaptable chromatic number} of
$G$, denoted by $\chi_{ad}(G)$. The concept of adapted coloring of
graphs was introduced by Hell and Zhu in \cite{HZ07}.

Let $L$ 
be a list assignment for
the vertices of a graph $G$, and $F$ be a (possibly improper) edge
coloring of $G$. A coloring $c$ of $G$ adapted to $F$ is an
\emph{$L$-coloring adapted to} $F$ if for any vertex $v \in V(G)$,
we have $c(v) \in L(v)$. If for any edge coloring $F$ of $G$ and
any list assignment $L$ with $|L(v)|\ge k$ for all $v \in V(G)$
there exists an $L$-coloring of $G$ adapted to $F$, we say that $G$
is \emph{adaptably $k$-choosable}. The smallest $k$ such that $G$ is
adaptably $k$-choosable is called the \emph{adaptable choosability}
 (or the  \emph{adaptable choice number})  of $G$, denoted by
$\ch_{ad}(G)$. The concept of adapted list coloring of graphs and
hypergraphs was introduced by Kostochka and Zhu in \cite{KZ07}.\\

The following observation was first made in \cite{EsperetKangThomasse}.

\begin{observation}\label{OBS}
If $G$ is
adaptably $k$-choosable, then $G$ is $(k,
1)$-choosable.
\end{observation}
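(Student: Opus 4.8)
The plan is to show that adaptable $k$-choosability implies $(k,1)$-choosability by taking an arbitrary $(k,1)$-list assignment and manufacturing, from the structure of the lists themselves, an edge-coloring $F$ to which we can apply the adaptable hypothesis. The key idea is that in a $(k,1)$-list assignment, adjacent vertices share \emph{at most one} color, so for each edge $uv\in E(G)$ there is essentially only one "dangerous" color that could produce a monochromatic conflict, and this single color is exactly what an edge-coloring can encode.

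Concretely, let $L$ be a $(k,1)$-list assignment for $G$; I want to find an $L$-coloring of $G$ (a proper vertex coloring with $\varphi(v)\in L(v)$). First I would define an edge-coloring $F$ as follows: for each edge $uv$, if $|L(u)\cap L(v)|=1$, set $F(uv)$ equal to the unique common color; if $L(u)\cap L(v)=\emptyset$, set $F(uv)$ to be some color not appearing in either list (or any fixed integer outside all lists, which is harmless). Since $|L(v)|\ge k$ for every $v$, the adaptable $k$-choosability of $G$ yields a coloring $c$ with $c(v)\in L(v)$ that is adapted to $F$. It then remains to verify that $c$ is a \emph{proper} coloring. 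Suppose for contradiction that $c(u)=c(v)$ for some edge $uv$; then this common value lies in $L(u)\cap L(v)$, so the intersection is nonempty, hence (by the separation condition) $L(u)\cap L(v)=\{F(uv)\}$ and $c(u)=c(v)=F(uv)$. But that is precisely a monochromatic edge under $F$, contradicting that $c$ is adapted to $F$. Therefore $c$ is a proper $L$-coloring, and $G$ is $(k,1)$-choosable.

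The translation between the two frameworks is the entire content of the argument, and the step deserving the most care is the construction of $F$ on edges whose endpoints have disjoint lists: we must ensure $F(uv)$ is chosen so that it cannot coincide with $c(u)=c(v)$ and spuriously force the adapted condition to fail to constrain a genuine conflict. Assigning such edges a color outside all vertex lists sidesteps this cleanly, since then $c(u)=c(v)=F(uv)$ is impossible (the value $F(uv)$ is in no list). I expect no real obstacle here; the only subtlety is bookkeeping to confirm that every potential monochromatic vertex conflict is captured by the single shared color encoded on the corresponding edge, which is guaranteed precisely by the $d=1$ separation bound.
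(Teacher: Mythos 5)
Your proposal is correct and follows essentially the same approach as the paper: encode the unique shared color of each edge's endpoint lists as an edge-coloring $F$, invoke adaptable $k$-choosability, and observe that any improper edge would be monochromatic under $F$. Your explicit treatment of edges with disjoint lists is a minor point of extra care that the paper glosses over, but the argument is identical in substance.
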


\begin{proof}
Assume that $G$ is adaptably $k$-choosable. 
Let $L$ be a $(k,1)$-list assignment for $G$.  
For any edge $e=xy$ of $G$ 
we color $xy$ with the unique element in $L(x) \cap L(y)$; 
let $F$ be this edge coloring of $G$. Since $G$ is adaptably $k$-choosable, 
there is a coloring of $G$ from the lists which is adapted to $F$.
Since any two adjacent vertices of $G$ have at most one common color
in their lists, this coloring is proper.
Hence, $G$ is $(k,1)$-choosable.
\end{proof}

Our results on $(3,1)$-choosability are based on this connection; thus
any planar graph satisfying (i) or (ii) is adaptably $3$-choosable.
In Section 2 we give some further connections between 
adaptable choosability and $(3,1)$-choosability based
on the edge-arboricity of a graph.

	Our main results are proved in Section 3.
	Our proofs are based on connections between the maximum average degree of a graph 
	and orientations of the underlying graph. 
	A benefit of our method is that it yields rather short proofs;
	many results in this area are based on rather lengthy discharging arguments,
	or uses precoloring extension techniques based on the proof of Thomassen's
	celebrated theorem on $5$-choosability of planar graphs \cite{Thomassen}
	combined with a detailed structural analysis of the graph
	(cf. \cite{ChoiLidickyStolee,ChenFanWangWang,ChenLihWang, ChenFanRasp}).
	
	In Section 4 we 
	note that yet another family of planar
	graphs are $(3,1)$-choosable, namely the so-called Halin graphs.
	
\section{Edge arboricity and Adaptable choosability}

	The {\em edge-arboricity} $a(G)$ of a graph $G$
	is the minimum number of forests into which its edges can be partitioned.
	It is well-known that if a graph has arboricity at most $d$, then
	it has an orientation with out-degree at most $d$ (see e.g. \cite{Eppstein}).

	The following proposition demonstrates the connection between
	adaptable choosability (and thus $(k,1)$-choosability) and
	orientations.
	
	 \begin{proposition}
	\label{prop:arboricity}	
		If $a(G)\le k$, then $G$ is adaptably $(k+1)$-choosable, and thus
		$(k+1,1)$-choosable.
	\end{proposition}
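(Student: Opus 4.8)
The plan is to exploit the stated fact that arboricity at most $k$ yields an orientation with out-degree at most $k$, and then process the vertices in an order dictated by that orientation. Given $a(G) \le k$, fix an orientation $D$ of $G$ in which every vertex has out-degree at most $k$. The key structural observation is that such an orientation gives us a way to bound, for each vertex, the number of constraints it must simultaneously satisfy when we color greedily in reverse: each vertex $v$ has at most $k$ out-neighbors, so if we color along a suitable linear extension of a related order, each vertex sees only its out-edges as ``binding.''

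The central difficulty, which I expect to be the main obstacle, is that an orientation with out-degree $\le k$ need not be acyclic, so there is no immediate topological order to color greedily. I would handle this by recalling that a graph of arboricity at most $k$ is $k$-degenerate in the relevant sense only after decomposition; more robustly, I would argue directly. Suppose $|L(v)| \ge k+1$ for all $v$ and let $F$ be an arbitrary edge-coloring. I want to color each vertex $v$ avoiding, for each out-edge $vw$ of $D$, the ``forbidden'' situation where $v$ and $w$ both receive the color $F(vw)$. The plan is to reduce to a statement about list coloring a digraph where each vertex must avoid at most $k$ specific (color)-conflicts with its out-neighbors.

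Concretely, I would set up the coloring as follows. For each vertex $v$, the edges oriented \emph{out} of $v$ number at most $k$. I claim we can choose $c(v) \in L(v)$ so that for every out-edge $vw$, we do not create a monochromatic edge. The cleanest route is to observe that we only need to forbid $c(v)$ from taking the value $F(vw)$ when we have already committed $c(w) = F(vw)$; but to make this work without an acyclic order, I would instead charge each edge's conflict to its tail. Each vertex $v$, as a tail, is responsible for at most $k$ edges, and for each such edge $vw$ the constraint ``not ($c(v)=c(w)=F(vw)$)'' can be satisfied by excluding at most one color from $L(v)$, namely $F(vw)$, \emph{provided} we decide $c(v)$ with knowledge of the relevant out-neighbors. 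Since $|L(v)| \ge k+1 > k \ge$ (number of out-edges at $v$), there is always a surviving color.

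To make the dependency coherent I would invoke a standard argument: take any ordering of the vertices and color greedily, but assign to each edge the responsibility of its tail, so that when coloring $v$ we forbid only the at most $k$ colors $F(vw)$ over out-edges $vw$ \emph{whose head $w$ is already colored with} $F(vw)$. Since at most $k$ colors are ever forbidden at $v$ and $|L(v)| \ge k+1$, a valid color always remains; and every monochromatic edge would be an out-edge at its tail, which the procedure explicitly prevents. This shows $G$ is adaptably $(k+1)$-choosable, and by Observation~\ref{OBS} it is $(k+1,1)$-choosable. The subtle point to verify carefully is that charging conflicts to tails genuinely covers \emph{every} edge (each edge has exactly one tail) and that no conflict is double-counted, so the out-degree bound $k$ is exactly what caps the number of excluded colors.
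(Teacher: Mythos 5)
Your proof has a genuine gap in the final greedy step. You forbid the color $F(vw)$ at a vertex $v$ only for those out-edges $vw$ \emph{whose head $w$ is already colored with} $F(vw)$. Consider an edge oriented $v \to w$ where $v$ happens to precede $w$ in your chosen ordering. When you color $v$, the head $w$ is uncolored, so your rule forbids nothing for this edge and you may well set $c(v) = F(vw)$. When you later color $w$, the edge $vw$ is an \emph{in}-edge of $w$, so your tail-charging scheme never examines it again, and nothing stops you from setting $c(w) = F(vw)$ as well. The result is a monochromatic edge that your procedure did not ``explicitly prevent.'' The conditional exclusion only protects edges whose head is colored before their tail, and since (as you yourself note) the orientation need not be acyclic, you cannot in general choose an ordering in which every head precedes its tail.

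The fix is simpler than your argument, and it is what the paper does: at each vertex $v$, forbid the color $F(vw)$ for \emph{every} out-edge $vw$, unconditionally. Since $d^+(v) \le k$ and $|L(v)| \ge k+1$, some color of $L(v)$ survives; choose it as $c(v)$. This requires no ordering at all --- each vertex is colored independently --- and it guarantees $c(v) \neq F(vw)$ for every edge oriented $v \to w$, which already rules out that edge being monochromatic regardless of what $c(w)$ is. Your worry about the orientation failing to be acyclic is thus a red herring: the whole point of charging each edge to its tail is that the tail alone can kill the conflict by avoiding one color per out-edge, with no need to know anything about the heads' colors.
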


	\begin{proof}
	By assumption, $G$ has an orientation in which each vertex $x_i$
has $d^+(x_i) \le k$. Assume each vertex
$x_i$ is given a list $L(x_i)$ of $ k+1$ colors and $F$ is an edge coloring of $G$. Let $c(x_i)$
be any color in $L(x_i)$ which does not appear on any outgoing
edges of $x_i$. Then it is obvious that $c$ is an $L$-coloring of
$G$ adapted to $F$. This completes the proof of Proposition \ref{prop:arboricity}.
\end{proof}

	Since the edge-arboricity of a triangle-free planar graph is at most $2$ and the edge-arboricity of planar graphs is at most $3$, the preceding proposition yields yet another immediate proof of the
	facts that every triangle-free planar graph is $(3,1)$-choosable, and  that every planar graph is $(4,1)$-choosable \cite{KratochvilTuzaVoigt}.
	
	As pointed out above, Choi et al.~\cite{ChoiLidickyStolee}
	proved that planar graphs without $4$-cycles are 
	$(3,1)$-choosable, but there are  planar graphs without $4$-cycles which are not 
	adaptably $3$-colorable \cite{EMZ07}. 
	
However, as follows from Proposition \ref{prop:arboricity}, every
planar graph is adaptably $4$-choosable.
	We note that this in fact holds for any graph with no $K_5$-minor;
	which was first established in
	in \cite{EMZ07}.

\begin{corollary}
\label{cor2} Every $K_5$-minor free (simple) graph is adaptably
$4$-choosable.
\end{corollary}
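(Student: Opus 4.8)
The plan is to reduce the statement to Proposition \ref{prop:arboricity} by bounding the edge-arboricity. Since that proposition guarantees that $a(G) \le k$ implies adaptable $(k+1)$-choosability, it suffices to prove that every $K_5$-minor-free simple graph $G$ satisfies $a(G) \le 3$; taking $k = 3$ then yields adaptable $4$-choosability directly. So the whole task becomes a statement about edge-arboricity.

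To control $a(G)$, I would invoke the Nash-Williams arboricity formula, which expresses $a(G)$ as $\max_H \lceil |E(H)| / (|V(H)| - 1) \rceil$, where the maximum ranges over all subgraphs $H$ of $G$ with at least two vertices. Thus it is enough to show that every subgraph $H$ of $G$ on $m := |V(H)| \ge 2$ vertices satisfies $|E(H)| \le 3(m-1)$, since this forces each term in the maximum to be at most $3$.

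The key ingredient is the classical extremal bound that a simple $K_5$-minor-free graph on $m \ge 3$ vertices has at most $3m - 6$ edges, exactly the same bound as for planar graphs. Because the property of being $K_5$-minor-free is hereditary under taking subgraphs, every subgraph $H$ of $G$ is again $K_5$-minor-free, and hence has at most $3m - 6 \le 3(m-1)$ edges whenever $m \ge 3$; the cases $m \in \{1, 2\}$ are immediate as $G$ is simple. Feeding this into the Nash-Williams formula gives $a(G) \le 3$, and Proposition \ref{prop:arboricity} completes the argument.

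The main obstacle is the extremal edge bound $|E(H)| \le 3m - 6$ for $K_5$-minor-free graphs; this is the only non-elementary step and rests on deeper structural results, such as Wagner's decomposition of $K_5$-minor-free graphs or Mader's theorem on the maximum number of edges in graphs with no fixed complete minor. The remainder is a routine combination of the Nash-Williams characterization with the already-established Proposition \ref{prop:arboricity}.
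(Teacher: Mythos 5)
Your proposal is correct and follows essentially the same route as the paper, which likewise derives the corollary from Proposition \ref{prop:arboricity} via the edge bound $|E(G)| \le 3|V(G)| - 6$ for $K_5$-minor-free graphs; you merely spell out the Nash--Williams step that the paper leaves implicit.
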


As noted   in \cite{MontassierRaspaudZhu}, it is easy to prove that the 
edge-arboricity  of a $K_5$-minor free (simple) graph $G$ is at
most $3$ (this follows since such a graph satisfies $|E(G)| \leq 3|V (G)| - 6$);
so Proposition \ref{prop:arboricity} implies Corollary \ref{cor2}.
The latter statement yields the following.

\begin{corollary}
		If $G$ is a $K_5$-minor free graph, then $G$ is $(4,1)$-choosable.
	\end{corollary}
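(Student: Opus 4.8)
The plan is to derive this directly from the two facts just established, since the statement is a formal consequence of them. By Corollary~\ref{cor2}, every $K_5$-minor free (simple) graph is adaptably $4$-choosable, and by Observation~\ref{OBS}, any adaptably $k$-choosable graph is $(k,1)$-choosable. Specializing the latter to $k=4$ and feeding in the former yields the claim immediately. Alternatively, and perhaps most transparently, I would invoke Proposition~\ref{prop:arboricity} in one stroke: a $K_5$-minor free simple graph $G$ satisfies $|E(G)| \le 3|V(G)| - 6$, so its edge-arboricity is at most $3$, whence Proposition~\ref{prop:arboricity} with $k = 3$ already asserts that $G$ is both adaptably $4$-choosable \emph{and} $(4,1)$-choosable.

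The only point requiring a word of care is the reduction to simple graphs, since Corollary~\ref{cor2} is stated for simple graphs whereas the corollary here omits the qualifier. This is harmless: in any $(4,1)$-list assignment the separation condition $|L(x)\cap L(y)|\le 1$ and the requirement of a proper coloring depend only on the \emph{adjacency} relation, not on edge multiplicities, so parallel edges can be deleted without affecting colorability. (Loops, if permitted, would make the graph non-colorable, but these are excluded by the standing convention of simple graphs in this context.) Thus we may assume $G$ is simple, and the edge-count bound $|E(G)|\le 3|V(G)|-6$ used to bound the edge-arboricity applies verbatim.

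There is no real obstacle here; the content of the corollary lies entirely in the preceding results. If anything, the ``hard part'' is purely expository: deciding whether to route the argument through the adaptable-choosability statement of Corollary~\ref{cor2} or to appeal straight to the arboricity bound via Proposition~\ref{prop:arboricity}. I would choose the latter, as it makes the $(4,1)$-choosability conclusion explicit in a single application and keeps the proof to one line.

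\begin{proof}
Since deleting parallel edges affects neither adjacency nor the $(4,1)$-list condition, we may assume $G$ is simple. Then $|E(G)| \le 3|V(G)| - 6$, so the edge-arboricity of $G$ is at most $3$. By Proposition~\ref{prop:arboricity} with $k = 3$, the graph $G$ is $(4,1)$-choosable.
\end{proof}
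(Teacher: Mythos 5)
Your proof is correct and follows essentially the same route as the paper, which likewise derives the corollary from the edge-count bound $|E(G)|\le 3|V(G)|-6$, the resulting edge-arboricity bound $a(G)\le 3$, and Proposition~\ref{prop:arboricity} (equivalently, Corollary~\ref{cor2} combined with Observation~\ref{OBS}). Your remark on reducing to simple graphs is a harmless extra precaution.
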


	\section{Sufficient conditions for adaptable $3$-choosability and 
	$(3,1)$-choosability of planar graphs} 
	
	In this section we prove our main results on adaptable 3-choosability and  
	$(3,1)$-choosability 
	of planar graphs.

	Given a graph $G$, the {\em maximum average degree} 
	of $G$, denoted by $\mad(G)$, 
	is the maximum of the average degrees of all subgraphs of $G$, i.e.,
	$$\mad(G) = \max\{2|E(H)|/|V (H)| : H \text{ is a subgraph of } G\}.$$
	
	We denote by $d(v)$ the degree of a vertex $v$ in $G$ and by $r(f)$
	the degree of a face $f$, i.e. the number of edges incident with it.
	A $k$-face is a face of degree $k$, and a $k^+$-face (respectively, $k^-$-face)
	is a face with degree at least $k$ (respectively, at most $k$).

	In \cite{MontassierRaspaudZhu}
the following theorem  is proved  using the orientation method 
presented in Section 2 (see \cite{HUBERT94} for a detailed introduction 
to orientation and Mad).

\begin{theorem}
\label{main} For any graph $G$,
$$\ch_{ad}(G) \le \lceil \mad(G)/2 \rceil + 1.$$
\end{theorem}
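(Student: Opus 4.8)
The plan is to prove this via the orientation characterization established in Proposition~\ref{prop:arboricity}, combined with a bound relating the maximum out-degree of a suitable orientation to $\mad(G)$. Set $D = \lceil \mad(G)/2 \rceil$. The heart of the argument is the claim that every graph $G$ with $\mad(G) \le 2D$ admits an orientation in which every vertex has out-degree at most $D$. Once this orientation is in hand, the adaptable coloring follows exactly as in the proof of Proposition~\ref{prop:arboricity}: given any list assignment $L$ with $|L(v)| \ge D+1$ and any edge-coloring $F$, process the vertices and assign to each $x$ a color from $L(x)$ avoiding the colors of its (at most $D$) outgoing edges; since $|L(x)| \ge D+1 > D$, such a color always exists, and the resulting coloring is adapted to $F$. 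This yields $\ch_{ad}(G) \le D+1 = \lceil \mad(G)/2 \rceil + 1$.

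The main work is therefore the orientation lemma, and the natural tool is a counting/Hall-type or discharging argument on out-degrees. First I would observe that it suffices to orient the edges so that $d^+(v) \le D$ for all $v$. A clean way to produce such an orientation is the classical approach via the Nash-Williams arboricity theorem or, more directly, an Eulerian-style balancing argument: one shows that if some vertex has out-degree exceeding $D$, there is a directed path to a vertex of out-degree strictly less than $D$ along which reversing all arcs decreases the number of high-out-degree vertices without creating new violations. The existence of such a path is where the density hypothesis $\mad(G) \le 2D$ enters. Concretely, let $B$ be the set of vertices with $d^+(v) > D$ and suppose for contradiction that no improving path reaches a vertex of out-degree $\le D-1$; then the set $S$ of vertices reachable from $B$ along directed paths has all its out-arcs staying inside $S$ and every vertex of $S$ has out-degree at least $D$ (those in $B$ strictly more), forcing $|E(G[S])| \ge D|S| + 1 > D|S|$, hence the average degree of $G[S]$ exceeds $2D$, contradicting $\mad(G) \le 2D$.

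The step I expect to be the main obstacle is making the reachability/contradiction argument fully rigorous, in particular verifying that the subgraph $G[S]$ induced on the reachable set really does capture all out-arcs of its vertices (so that the edge count lower bound is valid) and that the strict inequality coming from a single over-loaded vertex is enough to beat the $\mad$ threshold. One must be careful that $S$ is closed under following out-arcs, which is automatic from its definition as the set reachable from $B$, and that reversing a path genuinely reduces a suitable potential (e.g.\ $\sum_v \max\{0, d^+(v) - D\}$) so that the procedure terminates. A subtlety is the ceiling: since $D = \lceil \mad(G)/2 \rceil \ge \mad(G)/2$, we have $2D \ge \mad(G)$, and the edge bound $2|E(G[S])|/|S| \le \mad(G) \le 2D$ must be contrasted with the strict lower bound $2|E(G[S])|/|S| \ge 2D + 2/|S| > 2D$ to reach the contradiction. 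Assembling these pieces completes the orientation lemma and hence the theorem.
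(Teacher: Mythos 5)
Your proof is correct and follows essentially the route the paper intends: the paper does not prove Theorem~\ref{main} itself but cites Montassier, Raspaud and Zhu, noting that their argument uses exactly the orientation method of Section~2, namely orient $G$ with maximum out-degree at most $\lceil \mad(G)/2 \rceil$ and then color greedily from the lists avoiding the colors of outgoing edges as in Proposition~\ref{prop:arboricity}. Your path-reversal derivation of the orientation lemma (Hakimi's theorem, which the paper's bibliography already cites) is a correct and complete filling-in of the one ingredient the paper delegates to the reference.
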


By using Obervation \ref{OBS} we have the following.

\begin{lemma}
\label{lem:orient}
Every graph $G$ is $(\lceil \mad(G)/2 \rceil + 1,1)$-choosable.
	
\end{lemma}

Now we prove the following theorem.
\begin{theorem}
\label{th:planar1}
If $G$ is a planar graph with no intersecting 
triangles and where every triangle is adjacent to at most
one $4$-cycle, then $\mad(G) < 4$.
\end{theorem}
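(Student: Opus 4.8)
The plan is to prove $\mad(G) < 4$ by the standard discharging method via Euler's formula.  Since $\mad(G)$ is a maximum over subgraphs, it suffices to show that every subgraph $H$ of $G$ satisfies $2|E(H)| < 4|V(H)|$, i.e.\ $|E(H)| < 2|V(H)|$.  Because every subgraph of a planar graph with no intersecting triangles and with every triangle adjacent to at most one $4$-cycle again has these hereditary properties, I would fix an arbitrary connected subgraph $H$ (assuming without loss of generality that it is connected and has a plane embedding inherited from $G$) and argue about $H$ directly.  Suppose for contradiction that $|E(H)| \ge 2|V(H)|$ for some subgraph; I would pick such an $H$ and derive a contradiction from Euler's formula $|V(H)| - |E(H)| + |F(H)| = 2$, which I will use in the form $6|F(H)| > 6|V(H)| - 6|E(H)| = -\text{(something positive)}$, i.e.\ I translate the edge bound into a statement about faces.

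The core of the argument is a discharging phase on the plane embedding of $H$.  I would assign to each face $f$ the initial charge $r(f) - 4$ and to each vertex $v$ the charge $d(v) - ?$, choosing the vertex and face charges so that, by Euler's formula together with the handshake identities $\sum_v d(v) = 2|E(H)|$ and $\sum_f r(f) = 2|E(H)|$, the total charge equals a fixed negative constant (the $-8$ coming from $-4 \cdot 2$ in Euler's formula when each term is shifted by $4$).  Concretely, using the shifted charge $\mu(x)$ with $\sum_x \mu(x) = -8$, I would then redistribute charge so that after discharging every vertex and every face has nonnegative charge, contradicting the negative total.  The structural hypotheses enter exactly here: because no two triangles intersect, each triangle (a $3$-face of $H$) is surrounded by faces that are not themselves triangles sharing a vertex, and because every triangle is adjacent to at most one $4$-cycle, each $3$-face can only borrow a limited amount of charge from its neighboring faces.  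The natural discharging rules are: every $5^+$-face sends a fixed amount of charge to each adjacent $3$-face, and $4$-faces (which start with charge $0$) participate in a controlled way governed by the ``at most one adjacent $4$-cycle'' condition, so that a $3$-face ends with charge exactly $0$ while no $5^+$-face is overdrawn.

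The main obstacle I expect is verifying the final charge of the $5^+$-faces and the $4$-faces after discharging, since these are the only faces that give charge away.  A $5$-face $f$ starts with charge $1$ and may be adjacent to several triangles; the hypothesis ``every $5$-cycle is adjacent to at most three triangles'' appears in condition (ii) but not in the present theorem, so here I must instead exploit the no-intersecting-triangles condition to bound the number of $3$-faces a given face $f$ can feed.  Indeed, since the triangles are vertex-disjoint, consecutive edges of $f$ cannot both lie on triangles, which caps the number of adjacent triangles at $\lfloor r(f)/2 \rfloor$ and makes the charge balance work out.  I would therefore set the amount a $5^+$-face sends to each adjacent triangle to be at most $\tfrac{r(f)-4}{\lfloor r(f)/2 \rfloor}$, check that this is at least the $1$ unit each triangle needs to reach $0$, and confirm that the ``at most one $4$-cycle'' hypothesis handles the remaining deficiency of a triangle adjacent to a $4$-face.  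Establishing these inequalities uniformly over all face sizes, and carefully treating the boundary case of small or degenerate subgraphs $H$ (e.g.\ $H$ with very few vertices, or $H$ not $2$-connected so that faces are not bounded by cycles), is the delicate part; once the nonnegativity of all final charges is verified, the contradiction with the negative total charge is immediate and the bound $\mad(G) < 4$ follows.
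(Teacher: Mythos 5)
Your overall framework matches the paper's: assign charge $r(f)-4$ to faces and $d(v)-4$ to vertices, use Euler's formula to get total charge $-8$, and discharge from $5^+$-faces into $3$-faces, with the two hypotheses controlling how much each $3$-face can collect. However, there is a genuine gap at the heart of your charge verification. Your key structural claim --- ``since the triangles are vertex-disjoint, consecutive edges of $f$ cannot both lie on triangles'' --- is false when the two consecutive edges lie on the \emph{same} triangle, i.e.\ when a $3$-face shares two of its edges with $f$. The no-intersecting-triangles hypothesis forbids two \emph{distinct} triangles from sharing a vertex, but it does not forbid a single triangle $v_1v_2v_3$ from sitting on two consecutive edges $v_1v_2,v_2v_3$ of a $5$-face $v_1v_2v_3v_4v_5$. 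This is exactly the configuration the paper isolates as a ``bad $3$-face,'' and it breaks your accounting in both possible readings of your rule: if the $5$-face sends charge per shared edge, it can be adjacent to such a bad $3$-face (two edges) and to a further ordinary $3$-face on the edge $v_4v_5$ (whose vertices avoid $\{v_1,v_2,v_3\}$, so no intersection occurs), and then gives away $3\cdot\tfrac12>1=r(f)-4$; if instead it sends $\tfrac{r(f)-4}{\lfloor r(f)/2\rfloor}=\tfrac12$ per \emph{distinct} adjacent triangle, the bad $3$-face receives only $\tfrac12$ and cannot reach $0$, since its third edge $v_1v_3$ lies on the $4$-cycle $v_1v_3v_4v_5$ and its one permitted adjacent $4$-cycle is thereby used up.

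A second, smaller slip: for a $5$-face your proposed per-triangle amount is $\tfrac12$, not the ``$1$ unit each triangle needs,'' so a triangle must collect from at least two of its edges; this is recoverable (a triangle is adjacent to no $3$-face and at most one $4$-face, hence to at least two $5^+$-faces) but needs to be said. The paper resolves the bad-$3$-face problem with an asymmetric rule --- a $5$-face with a bad $3$-face gives its entire unit to that bad face and \emph{nothing} to its ordinary adjacent triangles --- and then proves separately that an ordinary triangle starved in this way still survives, because the ``at most one adjacent $4$-cycle'' hypothesis (applied to the $4$-cycle $v_1v_3v_4v_5$ created by the chord of the bad configuration) forces its other two edges onto faces that do pay $\tfrac12$ each. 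Without this case split, the nonnegativity of final charges does not go through. Finally, your stated goal of making \emph{every vertex} end with nonnegative charge is unachievable (a degree-$2$ vertex keeps charge $-2$ and nothing feeds it); the correct move, as in the paper, is to require nonnegativity only of faces and to use the contradiction hypothesis $|E(H)|\ge 2|V(H)|$, which already gives $\sum_{v}(d(v)-4)\ge 0$.
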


\begin{proof}
Suppose, for a contradiction, that there is a
subgraph $H$ of $G$ with average degree at least $4$. Clearly,
we may assume that $H$ is connected.
We shall use a discharging argument for proving that $H$
has average degree less than $4$, thus yielding the desired contradiction.

Let $V, E,F$ be the sets of vertices, edges and faces of $H$, respectively.
By Euler's formula $|V|-|E| + |F|=2$, we have
\begin{equation}
\nonumber
4|E|-4|V| -4|F|=-8.
\end{equation}
Rewriting this yields
\begin{equation}
\label{eq:euler}
\sum_{v \in V}(d(v)-4) + \sum_{f\in F}(r(f)-4)=-8.
\end{equation}

We now define a weight function $\omega : V \cup F \to \mathbb{R}$
by setting $w(v) = d(v)-4$ if $v\in V$, and
$w(f) = r(f)-4$ if $f \in F$.

A $3$-face that shares two edges with an adjacent $5$-face is
called a {\em bad} $3$-face for this $5$-face; a $3$-face
that shares one edge with an adjacent $5$-face is called an
{\em ordinary} $3$-face for this $5$-face.
Note that a $5$-face has at most one bad $3$-face.

Our discharging procedure is simple.

\begin{itemize}

	\item [(R1)] A $5$-face gives $\frac{1}{2}$ to each adjacent
	$3$-face if none of them are bad; if one adjacent $3$-face is bad,
	then the $5$-face gives $1$ to this bad $3$-face, and nothing to any
	other adjacent ordinary $3$-face.
	
	\item[(R2)]
	A $6^+$-face gives $\frac{1}{2}$ to every adjacent
	$3$-face that it shares one edge with, and it
	gives $1$ to every $3$-face that it shares at least two edges with.
	
\end{itemize}

Let $w'$ be the weight function obtained by applying (R1) 
and (R2) to the graph $H$
and the function $w$.
We shall prove that $w'(f) \geq 0$ for any face $f$ of $H$.

Now, since $G$ has no intersecting triangles
and every triangle is adjacent to at most one $4$-cycle,
each $3$-face in $H$ is adjacent to no $3$-face, and 
at most one $4$-face (via at most one edge). We consider some cases.
\begin{itemize}

\item If a $3$-face $f$ is adjacent to two $5^+$-faces
with no bad $3$-faces, then $w'(f) \geq -1 + 2 \times 1/2 =0$.

\item If a $3$-face $f$ is adjacent to a $5$-face with a bad $3$-face
via one edge, then the other two edges of $f$
do not lie on $4$-faces or on $5$-faces with bad $3$-faces; because
every triangle in $G$ is adjacent to at most one $4$-cycle.
Thus, we have  $w'(f) = -1 + 2 \times 1/2 =0$.

\item If a $3$-face $f$ is a bad $3$-face for an adjacent $5$-face,
then it receives $1$ from this $5$-face. Hence,
$w'(f) \geq -1 + 1 =0$.

\end{itemize}
In conclusion, every $3$-face $f$ satisfies that $w'(f) \geq 0$.

Every $4$-face $f$ clearly satisfies $w'(f) =0$. 
If a $5$-face $f$ is adjacent to a bad $3$-face then it
satisfies $w'(f) = 1-1=0$; if it is not adjacent to a bad $3$-face,
then it is adjacent to at most two distinct $3$-faces, since
no pair of triangles intersect in $G$; 
thus $w'(f) \geq 1- 2 \times \frac{1}{2} =0$.

Consider a $2k$-face $f$, where $k \geq 3$.
The face $f$ gives $1$ to an adjacent $3$-face that it shares at least
two edges with, and it gives $\frac{1}{2}$ to a $3$-face that it shares
one edge with.
Since no pair of triangles intersect, $f$
is adjacent to $3$-faces via at most $\frac{4k}{3}$ 
edges if $f$ shares at most two edges with every adjacent $3$-face.
Since a $3$-face which shares two or three edges with $f$
receives $1$ from $f$, it follows that
$w'(f) \geq 2k-4-\frac{2k}{3} \geq 0$
if $k \geq 3$.
A similar calculation shows that $w'(f) \geq 0$ if $f$ is a $(2k+1)$-face, 
where $k\geq 3$.

By \eqref{eq:euler}, we
have that $$\sum_{v \in V}w(v) + \sum_{f\in F}w(f) <0.$$
Hence, the same holds for the weight function $w'$ obtained by 
redistributing the charge.
Now, since $w'(f) \geq 0$ for any face $f$ in $H$, 
we have that $\sum_{v \in V} w'(v) < 0$, which means that
$\sum_{v \in V}(d(v)-4) < 0$, and so the average degree in $H$
is less than $4$, contrary to our assumption.
\end{proof}

Theorem \ref{main} and Lemma \ref{lem:orient} yield the following.

\begin{theorem}\label{IntesectingTriangles}
If $G$ is a planar graph with no intersecting
triangles and where every triangle is adjacent to at most
one $4$-cycle, then $\ch_{ad}(G) \leq 3$, and thus $G$ is $(3,1)$-choosable.

\end{theorem}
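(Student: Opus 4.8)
The plan is to combine Theorem~\ref{th:planar1} with Theorem~\ref{main} and Observation~\ref{OBS}; since all the substantive work has been done, the present statement follows by a short deduction and needs no new argument. First I would invoke Theorem~\ref{th:planar1}: because $G$ is planar, has no intersecting triangles, and every triangle is adjacent to at most one $4$-cycle, its maximum average degree satisfies $\mad(G) < 4$.

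Next I would feed this bound into Theorem~\ref{main}. The strict inequality $\mad(G) < 4$ gives $\mad(G)/2 < 2$, so the ceiling satisfies $\lceil \mad(G)/2 \rceil \le 2$, and hence $\lceil \mad(G)/2 \rceil + 1 \le 3$. Theorem~\ref{main} then yields $\ch_{ad}(G) \le \lceil \mad(G)/2 \rceil + 1 \le 3$, that is, $G$ is adaptably $3$-choosable. Finally I would apply Observation~\ref{OBS} (equivalently, Lemma~\ref{lem:orient} with the same degree bound) to conclude that $G$ is $(3,1)$-choosable, completing the proof.

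There is no real obstacle at this stage, as the entire difficulty is already packaged into the discharging argument proving Theorem~\ref{th:planar1}. The only point that warrants a moment's care is the ceiling estimate: it is precisely the \emph{strict} inequality $\mad(G) < 4$ that forces $\lceil \mad(G)/2 \rceil + 1$ down to $3$ rather than $4$. Were the bound merely $\mad(G) \le 4$, the ceiling could equal $2$ at the boundary value and the conclusion would still hold, but the strictness established in Theorem~\ref{th:planar1} makes the step unambiguous and is exactly what the chain of results is designed to exploit.
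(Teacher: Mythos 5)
Your proposal is correct and matches the paper exactly: the paper derives Theorem~\ref{IntesectingTriangles} by combining Theorem~\ref{th:planar1} with Theorem~\ref{main} and Lemma~\ref{lem:orient} (i.e., Observation~\ref{OBS}), precisely as you do. Your side remark is also accurate that, because of the ceiling, even $\mad(G)\le 4$ would suffice for the bound $\lceil \mad(G)/2\rceil + 1\le 3$, so nothing further needs checking.
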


\begin{remark}
	It is easily verified that the proof of Theorem \ref{th:planar1}
		(with the exact same discharging rules and calculations)
	is valid 
	if the following two conditions hold:
	\begin{itemize}
	
		\item [(i)] every triangle satisfies that at most one of its edges
		lie on another cycle
		of length at most four;
					
		\item[(ii)] every $5$-, $6$-, and $7$-cycle 
		satisfies that at most 
		two, four, and six of its edges lie on triangles, respectively.

	\end{itemize}
	Hence, every planar graph $G$ satisfying these conditions is adaptably $3$-choosable, and also $(3,1)$-choosable.
\end{remark}
Another immediate consequence of the preceding theorem is the following.

\begin{corollary}
\label{cor:planar1}
If $G$ is a planar graph with no intersecting 
triangles and no intersecting $4$-cycles,
then $G$ is adaptably $3$-choosable, and thus $(3,1)$-choosable.
\end{corollary}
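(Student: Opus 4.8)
The plan is to deduce this corollary directly from Theorem \ref{IntesectingTriangles} by showing that the hypotheses assumed here are at least as strong as those of that theorem. Both statements assume that $G$ has no intersecting triangles, so the only thing I would need to verify is that the condition ``no two $4$-cycles intersect'' forces the condition ``every triangle is adjacent to at most one $4$-cycle.'' Once that implication is in hand, the corollary is immediate.

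To establish the implication I would argue by contradiction. Suppose some triangle $T$ is adjacent to two distinct $4$-cycles $C_1$ and $C_2$, meaning $T$ shares an edge $e_1$ with $C_1$ and an edge $e_2$ with $C_2$; since $T$ is a triangle, both $e_1$ and $e_2$ are among the three edges of $T$. I would then split into two cases. If $e_1 = e_2 = uv$, then both endpoints $u$ and $v$ lie in $C_1 \cap C_2$, so $C_1$ and $C_2$ intersect. If instead $e_1 \neq e_2$, then, using the elementary fact that any two distinct edges of a triangle share a vertex, $e_1$ and $e_2$ have a common endpoint $w$; as $w \in C_1$ and $w \in C_2$, again $C_1$ and $C_2$ intersect. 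In both cases this contradicts the assumption that $G$ has no intersecting $4$-cycles.

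With this verified, the hypotheses of Theorem \ref{IntesectingTriangles} are satisfied, and that theorem yields $\ch_{ad}(G) \le 3$; hence $G$ is adaptably $3$-choosable, and by Observation \ref{OBS} it is also $(3,1)$-choosable.

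I do not expect any genuine obstacle here: the whole argument reduces to the short combinatorial observation that a triangle adjacent to two $4$-cycles forces those cycles to share a vertex. The only point requiring a little care is the case analysis (the two $4$-cycles sharing the \emph{same} edge of $T$ versus two \emph{different} edges), together with the remark that two distinct edges of a triangle always meet at a common vertex, which covers the second case.
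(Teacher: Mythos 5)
Your proposal is correct and matches the paper's (implicit) argument: the paper states this corollary as an immediate consequence of Theorem \ref{IntesectingTriangles} without further proof, and your verification that two distinct $4$-cycles adjacent to a common triangle must share a vertex of that triangle (hence intersect) is exactly the missing detail. Nothing further is needed.
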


Using a similar argument as in the proof of Theorem \ref{th:planar1}
we can prove the following.

\begin{theorem}
\label{th:planar2}
		If $G$ is a planar graph where
		no triangle of $G$ is adjacent to a triangle or a $4$-cycle, 
		and each $5$-cycle is adjacent to at most three
		triangles, then $\mad(G) < 4$.
		Hence, $G$
		is adaptably $3$-choosable and $(3,1)$-choosable.
\end{theorem}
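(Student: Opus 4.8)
The plan is to prove $\mad(G) < 4$; the two coloring statements then follow immediately, since $\mad(G) < 4$ forces $\lceil \mad(G)/2\rceil \le 2$, so Theorem \ref{main} gives $\ch_{ad}(G) \le 3$ and Lemma \ref{lem:orient} gives $(3,1)$-choosability. To bound the maximum average degree I would argue by contradiction exactly as in the proof of Theorem \ref{th:planar1}: assume some connected (and, after passing to a suitable $2$-connected subgraph, $2$-connected) plane subgraph $H$ has average degree at least $4$, so that its face boundaries are cycles; apply Euler's formula to obtain the identity \eqref{eq:euler}; and set $w(v) = d(v)-4$ and $w(f) = r(f)-4$. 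The only elements of negative charge are the $3$-faces, each carrying $-1$, so the whole task reduces to redistributing charge from the larger faces onto the $3$-faces and checking that every face ends nonnegative.

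The key structural point, and the one that makes the present hypotheses cleaner than those of Theorem \ref{th:planar1}, is the following. Since no triangle is adjacent to a triangle or a $4$-cycle, each of the three edges of a $3$-face borders a $5^+$-face; in particular no $3$-face is adjacent to a $4$-face. Moreover, no triangle can share two edges with a single $5$-face: if a triangle shared the two (necessarily consecutive) edges $v_1v_2$ and $v_2v_3$ of a $5$-face bounded by $v_1v_2v_3v_4v_5$, then its third edge $v_1v_3$, together with $v_3v_4$, $v_4v_5$, $v_5v_1$, would form a $4$-cycle to which the triangle is adjacent (via $v_1v_3$), contradicting the hypothesis. Thus the \emph{bad} $3$-faces that forced the two-tier rule in Theorem \ref{th:planar1} cannot occur here, and every $3$-face meets each of its three neighbouring $5^+$-faces in exactly one edge. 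Consequently, for a $5$-face the number of boundary edges lying on triangles equals the number of adjacent triangles, which the hypothesis on $5$-cycles caps at three.

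With this in hand I would use a single uniform discharging rule: each $5^+$-face sends $\tfrac13$ across every edge it shares with a $3$-face. The verification is then short. A $3$-face receives $\tfrac13$ across each of its three edges and ends with $-1 + 3\cdot\tfrac13 = 0$. A $4$-face sends and receives nothing, so it keeps charge $0$. A $5$-face has charge $1$ and, by the structural point above, is adjacent to at most three triangles, each across exactly one edge, so it sends at most $3\cdot\tfrac13 = 1$ and ends nonnegative. An $m$-face with $m\ge 6$ has charge $m-4$ and sends at most $\tfrac{m}{3}$ (one $\tfrac13$ per boundary edge), and $m-4 \ge \tfrac{m}{3}$ precisely when $m\ge 6$. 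Hence $w'(f)\ge 0$ for every face $f$. Since discharging preserves the total charge and leaves the vertex charges untouched, \eqref{eq:euler} gives $\sum_{v}(d(v)-4) = \sum_v w'(v) \le \sum_v w'(v) + \sum_f w'(f) = -8 < 0$, so $H$ has average degree less than $4$, the desired contradiction.

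The routine parts are the $4$-face and large-face cases; the main obstacle is the $5$-face, and I expect the real content to lie in justifying the structural observation that no triangle shares two edges with a $5$-face. That observation is exactly what converts the hypothesis ``every $5$-cycle is adjacent to at most three triangles'' into the clean edge count ``at most three triangle-edges on the boundary of a $5$-face,'' which is what the uniform $\tfrac13$-rule requires. A secondary point to handle carefully is the reduction to a $2$-connected $H$, so that a $5$-face is genuinely bounded by a $5$-cycle of $H \subseteq G$ and the hypothesis on $5$-cycles may be applied; with that reduction, the corresponding restrictions on $3$-faces, $4$-faces, and $5$-faces of $H$ all follow from the stated conditions on cycles of $G$.
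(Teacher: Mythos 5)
Your proposal is correct and follows essentially the same route as the paper: the paper's own proof simply replaces rules (R1)--(R2) by the uniform rule (R3) (each $5^+$-face sends $\frac{1}{3}$ across every edge shared with a triangle) and omits the verification, which you supply. Your added observation that no triangle can share two edges with a $5$-face (else its third edge would be a chord creating an adjacent $4$-cycle) is exactly the point needed to make the $5$-face case work, and your face-by-face charge count matches what the paper intends.
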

\begin{proof}
	The proof of Theorem \ref{th:planar2} is similar to the proof of
	Theorem \ref{th:planar1}. The only substantial difference
	is that instead of the discharging rules (R1) and (R2) we apply the following.
	
	\begin{itemize}

	\item [(R3)] For each edge between a
	$5^+$-face and a triangle, the $5^+$-face
	gives $\frac{1}{3}$ to the adjacent triangle.
\end{itemize}
Similar calculations as in the proof of 
Theorem \ref{th:planar1}
then yield the desired contradiction; the details are
omitted.
\end{proof}

We have to notice that our Theorem \ref{IntesectingTriangles} gives a very short proof of  the following theorem proved in  \cite{GuanZhu}.
\begin{theorem}
Suppose G is a planar graph. Then G is adaptably 3-choosable if any two triangles in G have distance at least 2 and
no triangle is adjacent to a 4-cycle.
\end{theorem}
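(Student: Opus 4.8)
The plan is to show that the hypotheses of this theorem are a special case of the hypotheses of Theorem \ref{IntesectingTriangles}, so that the desired conclusion follows immediately. The statement to be proved assumes that any two triangles in $G$ have distance at least $2$ and that no triangle is adjacent to a $4$-cycle; I want to conclude that $G$ is adaptably $3$-choosable. Since Theorem \ref{IntesectingTriangles} already guarantees adaptable $3$-choosability (and $(3,1)$-choosability) for planar graphs with no intersecting triangles in which every triangle is adjacent to at most one $4$-cycle, it suffices to verify that the stronger distance hypothesis implies both of these conditions.

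First I would unpack the phrase ``any two triangles have distance at least $2$.'' Two triangles at distance at least $2$ share neither a vertex nor an edge, and moreover no vertex of one is adjacent to a vertex of the other. In particular, distinct triangles cannot share a vertex, so no two triangles are intersecting; this is precisely the first hypothesis of Theorem \ref{IntesectingTriangles}. Second, the assumption that no triangle is adjacent to a $4$-cycle means that every triangle is adjacent to zero $4$-cycles, which trivially satisfies the requirement ``adjacent to at most one $4$-cycle.'' Thus both conditions of Theorem \ref{IntesectingTriangles} hold, and that theorem yields $\ch_{ad}(G) \leq 3$, i.e.\ $G$ is adaptably $3$-choosable, as desired.

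The main step is therefore purely a matter of comparing hypotheses, and there is no genuine obstacle: the deduction is an implication between conditions rather than a new combinatorial argument. The only point requiring a moment's care is the precise reading of ``distance at least $2$'' between triangles; I would make explicit that this is at least as strong as the non-intersecting condition (which forbids a common vertex), since distance at least $2$ additionally forbids adjacency between the two triangles. Once this comparison is recorded, the proof consists of a single sentence invoking Theorem \ref{IntesectingTriangles}, which is exactly the ``very short proof'' advertised in the surrounding text.
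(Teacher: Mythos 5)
Your proposal is correct and is exactly the argument the paper intends: the text states that Theorem \ref{IntesectingTriangles} ``gives a very short proof'' of this result, and the deduction is precisely the hypothesis comparison you carry out (distance at least $2$ implies non-intersecting triangles, and adjacency to no $4$-cycle trivially implies adjacency to at most one).
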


	\section{Halin graphs}
	
	A {\em Halin graph} is a planar graph constructed from a planar drawing
	of a tree with at least four vertices and with no vertices
	of degree two 
	by connecting its leaves by a cycle that crosses none of its edges.
	The following lemma is well-known and easy to prove.
	
	\begin{lemma}
	\label{lem:cycle}
		Every cycle is $(2,1)$-choosable.
	\end{lemma}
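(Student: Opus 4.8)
The plan is to reduce the statement to a single greedy pass around the cycle, using the separation condition only to fix a safe colour at the closing vertex. Label the cycle $v_1 v_2 \cdots v_n v_1$ and let $L$ be an arbitrary $(2,1)$-list assignment. The obstacle in colouring a cycle greedily is well known: an odd cycle is not $2$-choosable, and the failure occurs precisely when one closes the cycle and the last vertex sees two distinct forbidden colours that together exhaust its size-two list. The key observation is that the separation hypothesis lets us neutralise one of those two constraints in advance.

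Concretely, first I would exploit that $v_1$ and $v_n$ are adjacent, so $|L(v_1) \cap L(v_n)| \leq 1$; since $|L(v_1)| \geq 2$, there is a colour $\alpha \in L(v_1) \setminus L(v_n)$. I would colour $v_1$ with $\alpha$. Next I would colour $v_2, v_3, \ldots, v_{n-1}$ in order, at each step choosing $c(v_i) \in L(v_i) \setminus \{c(v_{i-1})\}$; this is always possible because each list has at least two colours while $v_i$ has only one previously coloured neighbour. Finally I would colour $v_n$: its neighbour $v_{n-1}$ is coloured and $v_1$ carries $\alpha$, but since $\alpha \notin L(v_n)$ the constraint coming from $v_1$ is automatically satisfied by any admissible choice, so it suffices to pick $c(v_n) \in L(v_n) \setminus \{c(v_{n-1})\}$, which exists.

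The main (and essentially only) subtlety is the choice of $\alpha$ at the very start: by forcing $c(v_1) \notin L(v_n)$ we guarantee that when the greedy pass returns to $v_n$ it faces at most one genuine restriction rather than two, which is exactly what the size-two lists can absorb. No case distinction between even and odd cycles is needed, and the argument invokes the separation hypothesis $|L(v_1) \cap L(v_n)| \leq 1$ in a single decisive place.
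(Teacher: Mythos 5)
Your argument is correct and complete: choosing $c(v_1)=\alpha\in L(v_1)\setminus L(v_n)$ (which exists since $|L(v_1)|\ge 2$ and $|L(v_1)\cap L(v_n)|\le 1$) kills the closing constraint in advance, and the greedy pass then succeeds for every $n\ge 3$, odd or even. The paper states this lemma without proof as ``well-known and easy to prove,'' and what you have written is precisely the standard argument one would supply.
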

	
	\begin{proposition}
	\label{prop:Halin}
	Every Halin graph is $(3,1)$-choosable.
	\end{proposition}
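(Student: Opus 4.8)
The plan is to prove that every Halin graph $G$ is $(3,1)$-choosable by exploiting its special structure as a tree $T$ together with an outer cycle $C$ connecting the leaves of $T$. First I would fix a $(3,1)$-list assignment $L$ and separate the vertices into the inner tree vertices (internal vertices of $T$) and the outer cycle vertices (the leaves of $T$). The key structural observation is that after deleting the outer cycle $C$, what remains is the tree $T$, which is trivially list-colorable from lists of size $3$ since trees are $1$-degenerate. So the natural strategy is to color the interior first and then extend the coloring to the outer cycle, or alternatively to color the cycle first and then fan inward along the tree.

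The cleaner order, I expect, is to color the outer cycle $C$ first using Lemma \ref{lem:cycle}, which guarantees that every cycle is $(2,1)$-choosable, and then extend inward. However, the lists on $C$ have size $3 \ge 2$ and the separation condition $|L(x)\cap L(y)|\le 1$ for adjacent vertices carries over to $C$, so $C$ admits a proper $L$-coloring by Lemma \ref{lem:cycle}. Once $C$ is colored, I would process the internal tree vertices. The crucial point is that a Halin graph minus its outer cycle is a tree in which every internal vertex has the property that its neighbors toward the interior can be ordered so that each internal vertex, when we reach it, has at most two already-colored neighbors only if we are careful. This is where the degeneracy argument must be made precise: I would root $T$ at an internal vertex and color the internal vertices in reverse breadth-first order (from the deepest internal vertices outward toward the root), so that each internal vertex has only its parent uncolored when processed, leaving at most two constraints coming from its children.

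Concretely, after coloring $C$, I would order the internal vertices $v_1, v_2, \dots, v_m$ so that $v_i$ has at most one neighbor among $v_{i+1}, \dots, v_m$ (a reverse-elimination order obtained by repeatedly removing a leaf of the tree induced on internal vertices). When coloring $v_i$, its already-colored neighbors are its children in $T$ together with possibly some leaves on $C$; since $v_i$ has a list of size $3$ and each colored neighbor forbids at most one color (here the separation hypothesis is not even strictly needed for a proper extension, only the list size matters if the forbidden set is small), I must verify that the number of forbidden colors is at most $2$. The main obstacle is precisely this degree-counting: an internal vertex adjacent to several already-colored leaves on $C$ could in principle forbid three or more colors, so the argument must use the tree structure to guarantee that in the chosen elimination order each internal vertex sees at most two colored neighbors, or else use the separation condition $|L(x)\cap L(y)|\le 1$ together with the fact that distinct forbidden colors coincide rarely.

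The hard part will be handling internal vertices of high degree adjacent to many leaves: such a vertex could have all its tree-children be leaves on the already-colored cycle, forbidding potentially many colors. To overcome this, I expect the correct approach is to reverse the order and color the internal tree first (which is free, as trees are $3$-choosable with plenty of room), and only afterward color the outer cycle $C$; at that stage each cycle vertex has exactly one tree-neighbor already colored, so each cycle vertex has at most one forbidden color from the interior, and then the outer cycle, with lists effectively reduced to size at least $2$ and still satisfying the separation property, is $(2,1)$-choosable by Lemma \ref{lem:cycle}. This is the clean resolution: color $T$ greedily using $1$-degeneracy, restrict to reduced lists of size $\ge 2$ on $C$, and apply Lemma \ref{lem:cycle} to finish.
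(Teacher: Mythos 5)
Your final resolution---color the internal tree first, observe that each cycle vertex then has exactly one colored tree-neighbor so its list shrinks to size at least $2$ while the separation condition is inherited, and finish by applying Lemma \ref{lem:cycle} to the outer cycle---is exactly the paper's proof. The earlier detour (coloring $C$ first) is correctly diagnosed by you as failing at high-degree internal vertices and discarded, so the argument as finally stated is sound and essentially identical to the paper's.
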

	\begin{proof}
		Let $G = T \cup C$ be a Halin graph, where $T$ is the spanning tree, and
		$C$ is the outer cycle. Consider a $(3,1)$-list assignment $L$ for $G$.
		Now, any tree is trivially $(2,1)$-choosable; 
		thus, we can pick an $L$-coloring
		$\varphi$ of the tree $T'$ obtained from $T$ by removing all leaves. 
		Now, for every vertex $v$ of $C$ we define a new list assignment
		$L'$ by removing any color from $L(v)$ 
		that is used on a neighbor of $v$ in $T'$.
		Note that $L'$ is $(2,1)$-list assignment for $C$. By Lemma \ref{lem:cycle},
		$C$ is $L'$-colorable. By taking an $L'$-coloring of $C$ together with the
		coloring $\varphi$ of $T'$, we obtain an $L$-coloring of $G$.
	\end{proof}
	
	Since $K_4$ is $(3,1)$-choosable, but not $(2,1)$-choosable, Proposition
	\ref{prop:Halin} is in fact sharp.
	
	\bigskip

\section{Acknowledgement}
Carl Johan Casselgren was supported by a grant from the Swedish
Research Council (2017-05077).\\
Andr\'e Raspaud was partially supported by the ANR project HOSIGRA (ANR-17-CE40-0022).

\end{document}